\newcommand{\R}{\mathbb{R}}
\newcommand{\LL}{L}
\newcommand{\mo}[1]{|#1|}
\theoremstyle{plain}
\newtheorem{theo}{Theorem}[section]
\newtheorem{lemma}[theo]{Lemma}
\newtheorem{prop}[theo]{Proposition}
\newtheorem{rem}[theo]{Remark}
\theoremstyle{definition}
\theoremstyle{remark}
\newtheorem*{rema*}{Remark}
\def\bu{{\bf{u}}}
\def\bv{{\bf{v}}}
\def\ue{u^e}
\def\ve{v^e}
\def\cn{\mathbb{C}}
\begin{document}

\title{Dispersion for the Schr\"odinger equation on networks}  

\author[V. Banica]{Valeria Banica}
\address[V. Banica]{D\'epartement de Math\'ematiques\\ Universit\'e
  d'Evry\\ Bd. F.~Mitterrand\\ 91025 Evry\\ France} 
\email{Valeria.Banica@univ-evry.fr}
\author[L. I. Ignat]{Liviu I. Ignat}
\address[L. I. Ignat]{Institute of Mathematics ``Simion Stoilow'' of the Romanian Academy\\21 Calea Grivitei Street \\010702 Bucharest \\ Romania 
\hfill\break\indent \and
\hfill\break\indent 
 BCAM - Basque Center for Applied Mathematics,\\
Bizkaia Technology Park, Building 500 Derio, Basque Country, Spain.}

\email{liviu.ignat@gmail.com}
\thanks{V.B. is partially
  supported by the ANR project ``R.A.S''. L. I. Ignat is partially supported by PN II-RU-TE 4/2010 and PCCE-55/2008
of CNCSIS--UEFISCSU Romania.
   Both authors are supported by L.E.A. project. Parts of this paper have been developed during the second author's visit to Institute Henri Poincar\'e, Paris.} 
\begin{abstract} In this paper 
we consider the Schr\"odinger equation on a network formed by a tree with the last generation of edges formed by infinite strips. We give an explicit description of the solution of the linear Schr\"odinger equation with constant coefficients. This allows us to prove dispersive estimates, which in turn are useful for solving the nonlinear Schr\"odinger equation. The proof extends also to the laminar case of positive step-function coefficients having a finite number of discontinuities.
\end{abstract}

\maketitle

\section{Introduction}
Let us first consider the linear Schr\"{o}dinger equation (LSE) on $\mathbb R$:
\begin{equation}\label{sch1}
\left\{\begin{array}{l}
  iu_t+ u_{xx} = 0,  \,x\in \R, t\in\R,\\
  u(0,x)  =u_0(x), \,x\in \R.
    \end{array}\right.
\end{equation}
The
linear semigroup $e^{it\Delta}$ has two important properties, that can be easily seen via the Fourier transform. First, the
conservation of the $\LL^2$-norm:
\begin{equation}\label{energy}
\|e^{it\Delta}u_0\|_{\LL^2(\R)}=\|u_0\|_{\LL^2(\R)}
\end{equation}
 and a dispersive estimate of the form:
\begin{equation}\label{linfty}
\|e^{it\Delta}u_0\|_{L^\infty(\R)}\leq \frac C{\sqrt{|t|}}\|u_0\|_{\LL^1(\R)}, \
\ t\neq 0.
\end{equation}
From these two inequalities, by using the classical $TT^*$ argument, space-time estimates follow,  known as Strichartz estimates (\cite{Str},\cite{MR801582}):
\begin{equation}\label{dsch0001}
    \|e^{it\Delta}u_0\|_{\LL^q_t(\R,\,\LL^r_x(\R))}\leq C\|u_0\|_{\LL^2(\R)},
\end{equation}
 where $(q,r)$ are so-called admissible pairs:
 \begin{equation}\label{adm}
\frac 2 {q}+\frac 1r=\frac 12, \quad 2\leq q,r\leq \infty.
\end{equation}
 These dispersive estimates have been successfully applied to obtain well-posedness results  for the nonlinear Schr\"odinger equation (see  \cite{1055.35003}, \cite{MR2233925} and the reference therein).

 In this article we prove the dispersion inequality for the linear Schr\"odinger operator defined on a tree (bounded, connected graph without closed paths) with the external edges infinite.
 We assume that the tree does not contain vertices of multiplicity two, since they are irrelevant for our model. {{Let us notice that in this context we cannot use Fourier analysis as done on $\mathbb R$ for getting the dispersion inequality.}}
 
 The  presentation of the Laplace operator will be given in full details in the next section. Let us just say here that the Laplacian operator $\Delta_\Gamma$ acts as the usual Laplacian on $\mathbb R$ on each edge, and that at vertices the Kirchhoff conditions must be fulfilled: continuity condition for the functions on the graph and transmission condition at the level of their first derivative. So our analysis will be a 1-D ramified analysis. More general coupling conditions are discussed in Section \ref{open}.   

In \cite{Liviu} the second author proved the same result in the case of regular trees. This means some
restrictions on the shape of the trees: all the vertices of the same generation have the same number of descendants  and all the edges of the same generation are of the same length.
These restrictions allow to define some average functions on the edges of the same generation and  to analyze some  1-D laminar Schr\"odinger equation (depending on the shape of the tree), where dispersion estimates were available from the first author's paper \cite{Valeria}. The strategy used in \cite{Liviu} cannot be applied in the case of a general tree and the scope of this article is to extend the class of trees where the dispersion estimate holds. In the case of a graph with a closed path, in general there exist compact supported  eigenfunctions for the considered Laplace operator and then 
the dispersion estimate fails.

The motivation for studying  thin structures comes from mesoscopic physics and nanotechnology. Mesoscopic systems are those that have some dimensions which are too small to be treated using classical physics while they are too large to be considered on the quantum level only. The \textit{quantum wires} are physical systems with two dimensions reduced to a few nanometers. We refer to \cite{MR1937279} and references therein for more details on such type of structures.

The simplest model describing conduction in quantum wires is a Hamiltonian on a planar graph, i.e. a one-dimensional object. Throughtout the paper we consider a class of idealized quantum wires, where the configuration space is a planar graph and the Hamiltonian is minus the Laplacian with Kirchhoff's boundary conditions at the vertices of the graph. This condition makes the Hamiltonian to be a self-adjoint operator. More general coupling conditions that guarantee the self-adjointness are given in \cite{MR1671833}.

The problems addressed here enter in the framework of metric graphs or networks. Those are metric spaces which can be written as the union of finitely many intervals, which are compact or $[0,\infty)$ and any two of these intervals are either disjoint or intersect only in one or both of their endpoints.
Differential operators on metric graphs arise in a variety of applications. We mention some of them: carbon nano-structures \cite{MR2336365}, photonic crystals \cite{MR1856878}, high-temperature granular superconductors \cite{MR690539}, quantum waveguides \cite{carini}, 
free-electron theory of conjugated molecules in chemistry, quantum chaos, etc.
For more details we refer the reader to review papers \cite{MR1937279}, \cite{MR2042548}, \cite{gnut} and \cite{Exner}. 

The linear and cubic Schr\"odinger equation on simple networks with Kirchhoff connection conditions and particular type of data has been analyzed in \cite{MR2675854}. The symmetry imposed on the initial data and the shape of the networks allow to reduce the problem to a Schr\"odinger equation on the half-line with appropriate boundary conditions, for which a detailed study is done by inverse scattering. Some numerical experiments are also presented in \cite{MR2675854}. The propagation of solitons for the cubic Schr\"odinger equation on simple networks but with connection conditions in link with the mass and energy conservation is analyzed in \cite{zobi}. \\

The main result is the following, where by $\{I_e\}_{e\in E}$ we shall denote the edges of the tree.
\begin{theo}  \label{disp}
The solution of the linear Schr\"odinger equation on a tree is of the form
\begin{equation}\label{LSexpr}
e^{it\Delta_\Gamma}\bu_0(x)=\sum_{\lambda\in\mathbb R}\frac{a_\lambda}{\sqrt{|t|}} \int_{I_\lambda}e^{i\frac{\phi_\lambda(x,y)}{t}}\,\,\bu_0(y)\,dy.
\end{equation}
with $\phi_\lambda (x,y)\in\mathbb R$,  $I_\lambda \in\{I_e\}_{e\in E}$, $\sum_{\lambda\in\mathbb R} |a_\lambda|<\infty$, and it satisfies the
dispersion inequality 
\begin{equation}\label{dispersion}
\| e^{it\Delta_\Gamma}\bu_0\|_{\LL^\infty(\Gamma)}\leq \frac{C}{\sqrt {|t|}}
\| \bu_0\|_{\LL^1(\Gamma)},\,\,\, t\neq 0.
\end{equation}
\end{theo}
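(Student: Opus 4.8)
The plan is to obtain the explicit representation \eqref{LSexpr} by analyzing the resolvent, or equivalently the spectral representation of $e^{it\Delta_\Gamma}$, edge by edge, and then to read off \eqref{dispersion} directly from that formula by a stationary-phase type bound. The starting point is that on each external (semi-infinite) edge and each internal edge the equation is just the free Schr\"odinger equation $iu_t+u_{xx}=0$, so the building block is the free propagator kernel on a line, $\frac{1}{\sqrt{4\pi i t}}e^{i|x-y|^2/4t}$, whose only feature that matters here is that it is of the form $\frac{a}{\sqrt{|t|}}e^{i\phi(x,y)/t}$ with $|a|$ a fixed constant and $\phi\ge 0$ a quadratic (hence real) phase. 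The effect of the graph is entirely encoded in the Kirchhoff vertex conditions, which couple these kernels.

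First I would set up the construction via the time-dependent fundamental solution. Fix a source point $y$ on some edge $I_{e_0}$ and look for the kernel $G_t(x,y)$ of $e^{it\Delta_\Gamma}$ as a function of $x\in\Gamma$. On the edge containing $y$ it must be the free kernel plus an extra ``reflected'' free kernel; on every other edge it is a combination of free kernels with shifted and sign-changed arguments; at each vertex continuity and the derivative transmission condition impose linear relations among the amplitudes. Because the tree has no closed paths, a wave emitted at $y$ reaches any given point $x$ along finitely many combinatorial paths, but after bouncing it can return, so the natural object is a sum over all finite walks in the tree from the edge of $y$ to the edge of $x$. Each such walk of length $k$ contributes a free kernel whose phase $\phi_\lambda(x,y)$ is the square of the total (signed, unfolded) distance travelled along that walk — manifestly real and nonnegative — multiplied by an amplitude $a_\lambda$ which is a product of reflection/transmission coefficients, one per vertex crossed. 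The vertex coefficients are the entries of the scattering matrix at each vertex; for Kirchhoff conditions at a vertex of degree $d$ these are $-1+2/d$ (reflection) and $2/d$ (transmission), all of modulus $<1$ once $d\ge 2$, with the reflection coefficient vanishing or small. This yields exactly \eqref{LSexpr}, indexing the terms by $\lambda$ ranging over the (countable) set of admissible walks.

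The main obstacle — and the step requiring real work — is proving $\sum_\lambda|a_\lambda|<\infty$, i.e. that the sum over all walks of the products of vertex coefficients converges. This is where the tree structure and the external infinite edges are essential: a walk of combinatorial length $k$ carries an amplitude bounded by a product of $k$ factors each of modulus at most some $\rho<1$ (coming from $|{-1+2/d}|\le 1/3<1$ at the first branching vertex and $|2/d|\le 2/3$ in general, with the key point that the total gain around any excursion is strictly less than $1$ because there is a genuine branching), while the number of walks of length $k$ from a fixed start to a fixed end grows at most like $M^k$ where $M$ is the maximal vertex degree. One must check that the contraction always beats the branching; the clean way is to track, for the walk starting at the source, the net amplitude after each visit to the root/branching vertices and show it decays geometrically per unit combinatorial length, using that at every vertex the scattering matrix is a contraction in $\ell^\infty$ except it is only non-strict at degree-one vertices, which merely reflect without gain — and those are precisely the (absent, since edges are infinite) dead ends. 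Since external edges are infinite there are no degree-one vertices at all, so every vertex strictly shrinks the amplitude, giving absolute summability. One should also verify that the phases $\phi_\lambda(x,y)$ are uniformly bounded below away from $0$ only away from the diagonal; this is not needed for \eqref{dispersion} since each term is individually bounded by $\frac{|a_\lambda|}{\sqrt{4\pi|t|}}\|\bu_0\|_{\LL^1(\Gamma)}$ regardless of the phase.

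Granting the representation \eqref{LSexpr} with $\sum_\lambda|a_\lambda|<\infty$, the dispersion inequality \eqref{dispersion} is then immediate:
\begin{equation}\label{proofbound}
\Bigl| e^{it\Delta_\Gamma}\bu_0(x)\Bigr|\le \sum_{\lambda}\frac{|a_\lambda|}{\sqrt{|t|}}\int_{I_\lambda}\bigl|e^{i\phi_\lambda(x,y)/t}\bigr|\,|\bu_0(y)|\,dy\le \frac{1}{\sqrt{|t|}}\Bigl(\sum_\lambda|a_\lambda|\Bigr)\,\|\bu_0\|_{\LL^1(\Gamma)},
\end{equation}
which is \eqref{dispersion} with $C=\sum_\lambda|a_\lambda|$. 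Taking the supremum over $x\in\Gamma$ finishes the proof. The only subtlety to double-check is interchanging the sum and the supremum/integral, which is justified by the absolute convergence established above and Fubini–Tonelli, plus confirming that the series defining $e^{it\Delta_\Gamma}\bu_0$ actually reproduces the semigroup — for which I would verify it solves the equation on each edge, satisfies the Kirchhoff conditions at each vertex (guaranteed termwise by the choice of vertex coefficients), and has the correct initial trace as $t\to 0$ (the $\lambda$ corresponding to the trivial walk gives the free kernel on the source edge, whose limit is the Dirac mass, while all other terms are supported at positive distance and vanish in the limit). For the laminar case with step-function coefficients, the same walk expansion applies with the line replaced by $\mathbb R$ carrying finitely many interfaces, each contributing its own reflection/transmission pair of modulus $<1$; absolute summability follows by the same geometric argument, using the results of \cite{Valeria} on the one-dimensional laminar propagator as the building-block estimate.
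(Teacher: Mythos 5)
Your overall architecture (an explicit kernel built as a superposition of free one--dimensional kernels $\frac{a_\lambda}{\sqrt{|t|}}e^{i\phi_\lambda(x,y)/t}$, followed by a termwise bound) matches the target formula \eqref{LSexpr}, and your last step deducing \eqref{dispersion} is indeed immediate once $\sum_\lambda|a_\lambda|<\infty$ is known. The gap is precisely at that summability claim, which is the entire content of the theorem, and the mechanism you propose for it fails. The Kirchhoff scattering matrix at a vertex of degree $d$ is $S_d=\tfrac{2}{d}J-I$ (with $J$ the all--ones matrix); it is unitary, hence an $\ell^2$--isometry, but it is \emph{not} a contraction for absolute values: the sum of the moduli of the entries of each row is $(d-1)\tfrac{2}{d}+\bigl|\tfrac{2}{d}-1\bigr|=\tfrac{3d-4}{d}>1$ for every $d\geq 3$, and the reflection coefficient $\tfrac{2}{d}-1$ has modulus $\tfrac{d-2}{d}\to 1$ (it is $\leq 1/3$ only for $d=3$). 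Consequently the sum over walks of the products of the moduli of the vertex coefficients need not converge. Concretely, take a central vertex $v_0$ of degree $D$ joined to vertices $v_1,\dots,v_D$, each also of degree $D$ with $D-1$ infinite edges attached. A walk shuttling on the internal edges picks up total absolute mass $\tfrac{3D-4}{D}$ at each passage through $v_0$ and retains mass $\tfrac{D-2}{D}$ at each reflection at some $v_j$, so the two--step transfer matrix of absolute values on the directed internal edges has spectral radius $\tfrac{(3D-4)(D-2)}{D^2}$, which exceeds $1$ for all $D\geq 5$ (and tends to $3$). For such trees $\sum_{\mathrm{walks}}|a_{\mathrm{walk}}|=\infty$, so no termwise argument of the kind you describe can close the proof: the finiteness of $\sum_\lambda|a_\lambda|$ must come from cancellations between distinct walks contributing to the same phase. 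The statement that ``every vertex strictly shrinks the amplitude'' because there are no degree--one vertices is therefore false.

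This is exactly why the paper does not use the multiple--scattering expansion. It writes $e^{it\Delta_\Gamma}\bu_0=\frac{1}{\pi}\int_{\R} e^{it\tau^2}\,\tau R_{i\tau}\bu_0\,d\tau$, computes the resolvent by solving the finite linear system for the coefficients of $e^{\pm\omega x}$ on each edge, whose matrix $D_\Gamma(\omega)$ has entries that are real powers of $e^{\omega}$, and proves by induction on the number of vertices the key lower bound $|\det D_\Gamma(\omega)|>c_\Gamma$ on a strip around the imaginary axis (Proposition \ref{propdet}, whose induction requires the auxiliary estimate $|\det\tilde D_\Gamma/\det D_\Gamma|<1$). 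It then invokes the Wiener--type theorem for absolutely convergent generalized trigonometric sums (\S29 of \cite{Ge}) to conclude that $1/\det D_\Gamma(i\tau)=\sum_\lambda d_\lambda e^{i\tau\lambda}$ with $\sum_\lambda|d_\lambda|<\infty$; the coefficients $a_\lambda$ of \eqref{LSexpr} inherit summability from this resummed, cancellation--aware series, not from the raw walk expansion. To salvage your approach you would need either to restrict to trees for which the absolute walk sum genuinely converges (e.g.\ your star example, or trees where every internal vertex has at most one other internal neighbour of high degree), or to supply a Wiener--lemma--type argument yourself; as written, the central convergence step does not go through.
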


The proof uses the method in \cite{Valeria} in an appropriate way related to the ramified analysis on the tree, by recursion on the number of vertices. It consists in writing the solution in terms of the resolvent of the Laplacian, which in turn is computed in the framework of almost-periodic functions.\\

As mentioned before, Strichartz estimates \eqref{dsch0001} can be derived from the dispersion inequality and have been used intensively to obtain well-posedness results for the nonlinear Schr\"odinger equation (NSE). The arguments used in the context of NSE on $\mathbb R$ can also be used here to obtain the following as a typical result.

\begin{theo}\label{tree.nse}
 Let $p\in (0,4)$. For any $\bu_0\in \LL^2(\Gamma)$ there exists a unique solution
$$\displaystyle \bu\in C(\R,\LL^2(\Gamma))\cap \bigcap _{(q,r) \text{admissible}} \LL^{q}_{loc}(\R,\LL^r(\Gamma)),$$
of the nonlinear Schr\"odinger equation
\begin{equation}\label{eq.tree.non}
\left\{
\begin{array}{ll}
i\bu_t +\Delta_\Gamma \bu\pm|\bu|^{p}\bu=0,& t\neq 0,\\[10pt]
 \bu(0)=\bu_0,& t=0.
\end{array}
\right.
\end{equation}
Moreover, the $\LL^2(\Gamma)$-norm of $\bu$ is conserved along the time
$$\|\bu(t)\|_{\LL^2(\Gamma)}=\|\bu_0\|_{\LL^2(\Gamma)}.$$
\end{theo}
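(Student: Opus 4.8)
The plan is to derive Theorem~\ref{tree.nse} from the dispersion inequality \eqref{dispersion} of Theorem~\ref{disp} in exactly the same way the well-posedness theory is built on $\mathbb R$, so the argument is essentially a black-box application of the Strichartz machinery combined with a fixed-point scheme. First I would record that, since $e^{it\Delta_\Gamma}$ is unitary on $\LL^2(\Gamma)$ (this is the conservation of energy \eqref{energy}, which holds because $\Delta_\Gamma$ is self-adjoint with the Kirchhoff conditions) and satisfies \eqref{dispersion}, the abstract $TT^*$ argument of Keel--Tao / Ginibre--Velo applies verbatim: interpolating \eqref{dispersion} with the $\LL^2$ isometry gives $\|e^{it\Delta_\Gamma}u_0\|_{\LL^\infty_t(\R,\LL^2(\Gamma))}+\|e^{it\Delta_\Gamma}u_0\|_{\LL^q_t(\R,\LL^r(\Gamma))}\leq C\|u_0\|_{\LL^2(\Gamma)}$ for every admissible pair $(q,r)$ in the sense of \eqref{adm}, together with the dual (inhomogeneous) estimate for the Duhamel term $\int_0^t e^{i(t-s)\Delta_\Gamma}F(s)\,ds$ in $\LL^{q}_t\LL^r_x$ controlled by $\|F\|_{\LL^{\tilde q'}_t\LL^{\tilde r'}_x}$ for admissible $(\tilde q,\tilde r)$. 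Nothing about the graph structure enters here beyond the two estimates already proved, so this step is routine.

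Next I would set up the standard contraction. Writing the integral formulation
\begin{equation*}
\bu(t)=e^{it\Delta_\Gamma}\bu_0 \mp i\int_0^t e^{i(t-s)\Delta_\Gamma}\big(|\bu|^{p}\bu\big)(s)\,ds,
\end{equation*}
one fixes, for $p\in(0,4)$, the admissible pair $(q,r)$ with $r=p+2$ (so that $q=\tfrac{4(p+2)}{p}$), and works in the space $X_T=C([-T,T],\LL^2(\Gamma))\cap \LL^q([-T,T],\LL^r(\Gamma))$ with its natural norm. Using Strichartz on the linear term and the inhomogeneous Strichartz estimate on the Duhamel term, together with the pointwise bound $\big||\bu|^p\bu-|\bv|^p\bv\big|\lesssim (|\bu|^p+|\bv|^p)|\bu-\bv|$ and H\"older in space (to put $|\bu|^p\bu$ in $\LL^{r'}_x$) and in time (here the subcriticality $p<4$ is what produces a positive power of $T$), one shows the Duhamel map is a contraction on a ball of $X_{T}$ for $T=T(\|\bu_0\|_{\LL^2(\Gamma)})$ small enough. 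This yields existence and uniqueness of a local solution in $X_T$, and uniqueness in the full space $C(\R,\LL^2)\cap\bigcap_{(q,r)}\LL^q_{loc}\LL^r$ follows by the usual argument comparing two solutions on a short interval.

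Finally, to globalize I would prove the conservation of the $\LL^2$-norm. Formally this is immediate from multiplying the equation by $\overline{\bu}$, integrating over $\Gamma$, taking imaginary parts and using that $\Delta_\Gamma$ is self-adjoint (so $\mathrm{Im}\int_\Gamma \Delta_\Gamma\bu\,\overline{\bu}=0$, the Kirchhoff boundary terms cancelling) and that $\mathrm{Im}\int_\Gamma|\bu|^p\bu\,\overline{\bu}=0$; to make it rigorous one runs this on smooth approximations or invokes the standard density/limiting argument valid at the $\LL^2$ regularity guaranteed by the Strichartz bounds. Since $\|\bu(t)\|_{\LL^2(\Gamma)}=\|\bu_0\|_{\LL^2(\Gamma)}$ and the local existence time depends only on the $\LL^2$-norm, the local solution extends to all $t\in\R$. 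The only place where one must be a little careful---and hence the main (mild) obstacle---is checking that the integration-by-parts identities underlying $\LL^2$-conservation are legitimate on the metric graph, i.e.\ that the Kirchhoff transmission conditions make all vertex contributions vanish; but this is exactly the self-adjointness of $\Delta_\Gamma$ recorded in Section~\ref{open}, so no genuinely new difficulty arises and the proof is a faithful transcription of the $\mathbb R$ theory.
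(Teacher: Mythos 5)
Your proposal is correct and coincides with the paper's approach: the authors simply observe that once the dispersion estimate \eqref{dispersion} (hence Strichartz) is available, the result follows by the standard $\LL^2$-subcritical fixed-point argument of Theorem 4.6.1 in \cite{1055.35003}, which is exactly the scheme you reproduce (Strichartz via $TT^*$, contraction in $C([-T,T],\LL^2)\cap \LL^q\LL^r$ with $r=p+2$, conservation of the $\LL^2$-norm, and globalization). No divergence from the paper's route.
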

The proof is standard once the dispersion property is obtained and it follows as in \cite{1055.35003}, p. 109, Theorem 4.6.1.  \\

With the same method we obtain the same results in the case of the Laplacian on the graph with laminar coefficients (piecewise constants, bounded between two positive constants- the details on the laminar Laplacian are given in Section 3). This might be of physical interest when the wire on a edge is composed of different pieces. 
Equations with variable coefficients on networks have been previously analyzed in \cite{MR932369} for the heat equation and in \cite{MR1287844} for the wave equations. For clearness we prefer to treat separately the two cases even if the laminar case includes the constant coefficient case.\\


The paper is organized as follows. In Section \ref{notations} we introduce the Laplacian on a graph and write the systems we want to analyze. 
In Section \ref{const} we present in full details the proof of Theorem \ref{disp}. Section \ref{slaminar} contains the proof of the results of Theorem \ref{disp} in the laminar case. Some open problems are discussed in Section \ref{open}.

\section{Notations and Preliminaries}\label{notations}
In this section we present some generalities about metric graphs and introduce the Laplace operator on such structure.
Let $\Gamma=(V,E)$ be a graph where $V$ is a set of vertices and $E$ the set of edges. 
For each $v\in V$ we denote  $E_v=\{e\in E: v\in e\}$.
We assume that $\Gamma$ is a countable connected locally finite graph, i.e. the degree of each vertex $v$ of $\Gamma$ is finite:
$d(v)=|E_v|<\infty.$
The edges could be of finite length and then their ends are vertices of $V$ or they have infinite length and then we assume that each infinite edge is a ray with a single vertex belonging to $V$ (see \cite{MR2459876} for more details on graphs with infinite edges).

We fix an orientation of $\Gamma$ and for each oriented edge $e$, we denote by $I(e)$ the initial vertex and by $T(e)$ the terminal one. Of course in the case of infinite edges we have only initial vertices.

We identify every edge $e$ of $\Gamma$ with an interval $I_e$, where $I_e=[0,l_e]$ if the edge is finite and $I_e=[0,\infty)$ if the edge is infinite. This identification introduces a coordinate $x_e$ along the edge $e$. In this way $\Gamma$ is a metric space and is often named metric graph  \cite{MR2459876}. 

Let $v$ be a vertex of $V$ and $e$ be an edge in  $E_v$. We set for finite edges $e$
$$j(v,e)=\left\{
\begin{array}{lll}
0& \text{if} &v=I(e), \\[10pt]
l_e& \text{if} & v=T(e) 
\end{array}
\right.
$$
and
$$j(v,e)=0,\ \text{if}\ v=I(e)$$
for infinite edges.

We identify any function $\bu$ on $\Gamma$ with a collection $\{\ue\}_{e\in E}$ of functions $\ue$ defined on the edges  $e$ of $\Gamma$. Each $\ue$ can be considered as a function on the interval $I_e$. In fact, we use the same notation $\ue$ for both the function on the edge $e$ and the function on the interval $I_e$ identified with $e$.
For a function $\bu:\Gamma\rightarrow \cn$,  $\bu=\{u^e\}_{e\in E}$,  we denote by $f(\bu):\Gamma\rightarrow \cn$ the family 
$\{f(u^e)\}_{e\in E}$, where  $f(u^e):e\rightarrow\cn$.

A function $\bu=\{\ue\}_{e\in E}$ it is continuous if and only if $\ue$ is continuous on $I_e$ for every $e\in E$, and moreover, is continuous at the vertices of $\Gamma$:
$$\ue(j(v,e))=u^{e'}(j(v,e')), \quad \forall \ e,e'\in E_v.$$ 

The space $\LL^p(\Gamma)$, $1\leq p<\infty$ consists of all functions   $\bu=\{u_e\}_{e\in E}$ on $\Gamma$ that belong to $\LL^p(I_e)$
for each edge $e\in E$ and 
$$\|\bu\|_{\LL^p(\Gamma)}^p=\sum _{e\in E}\|u^e\|_{\LL^p(I_e)}^p<\infty.$$
Similarly, the space $\LL^\infty(\Gamma)$ consists of all functions that belong to $\LL^\infty(I_e)$ for each edge $e\in E$ and
$$\|\bu \|_{\LL^\infty(\Gamma)}=\sup _{e\in E}\|u^e\|_{\LL^\infty(I_e)}<\infty.$$

The Sobolev space $H^m(\Gamma)$, $m\geq 1$ an integer, consists in all continuous functions on $\Gamma$ that  belong to
$H^m(I_e)$ for each $e\in E$ and 
$$\|\bu \|_{H^m(\Gamma)}^2=\sum _{e\in E}\|u^e\|_{H^m(e)}^2<\infty.$$
The above spaces are  Hilbert spaces with the inner products
$$(\bu,\bv)_{\LL^2(\Gamma)}=\sum _{e\in E}(\ue,\ve)_{\LL^2(I_e)}=\sum _{e\in E}\int _{I_e}\ue(x)\overline{\ve}(x)dx$$
and
$$(\bu,\bv)_{H^m(\Gamma)}=\sum _{e\in E}(\ue,\ve)_{H^m(I_e)}= \sum _{e\in E}\sum _{k=0}^m\int _{I_e} \frac{d^k\ue}{dx^k} \overline{\frac{d^k\ve}{dx^k} }dx.$$

We now introduce the Laplace operator $\Delta_\Gamma$ on the graph $\Gamma$. Even if it is a standard procedure we prefer for the sake of completeness to follow  \cite{MR1476363}.  Consider the sesquilinear continuous form $\varphi$
on $H^1(\Gamma)$ defined by
$$\varphi(\bu,\bv)=(\bu_x,\bv_x)_{L^2(\Gamma)}=\sum _{e\in E}\int _{I_e}\ue_x(x)\overline{\ve_x}(x)dx.$$
We denote by $D(\Delta_\Gamma)$ the set of all the functions $\bu\in H^1(\Gamma)$ such that the linear map $\bv\in H^1(\Gamma)\rightarrow \varphi_{\bu}(\bv)=\varphi(\bu,\bv)$ satisfies
$$|\varphi(\bu,\bv)|\leq C\|\bv\|_{L^2(\Gamma)}\quad \text{for all}\ \bv\in H^1(\Gamma).$$
For $\bu\in D(\Delta_\Gamma)$, we can extend $\varphi_\bu$ to a linear continuous mapping on $\LL^2(\Gamma)$. There is a unique element in  $L^2(\Gamma)$ denoted by $\Delta_\Gamma \bu$, such that, 
$$\varphi(\bu,\bv)=-(\Delta_\Gamma \bu,\bv)\quad \text{for all}\ \bv\in H^1(\Gamma).$$

We now define the normal exterior derivative of a function $\bu=\{u^e\}_{e\in E}$  at the endpoints of the edges.
For each $e\in E$ and $v$ an endpoint of $e$ we consider the normal derivative of the restriction of $\bu$ to the edge $e$ of $E_v$ evaluated at $i(v,e)$ to be defined by:
$$\frac {\partial u^e}{\partial n_e}(j(v,e))=
\left\{
\begin{array}{lll}
-u_x^e(0+)&\text{if}& j(v,e)=0, \\[10pt]
u_x^e(l_e-)& \text{if}&j(v,e)=l_e  .
\end{array}
\right.
$$
With this notation it is easy to characterise $D(\Delta_\Gamma)$ (see \cite{MR1476363}): 
$$D(\Delta_\Gamma)=\Big\{\bu=\{u^e\}_{e\in E}\in H^2(\Gamma): \sum _{e\in E_v} \frac {\partial u^e}{\partial n_e}(j(v,e))=0\quad 
\text{for all}\ v\in V\Big\}$$
and
$$(\Delta_\Gamma \bu)^e=(u^e)_{xx}\quad \text{for all}\ e\in E, \bu\in D(\Delta_\Gamma).$$
In other words  $D(\Delta_\Gamma)$ is the space of all continuous functions on $\Gamma$, $\bu=\{u^e\}_{e\in E}$,  such that for every edge $e\in E$,
$u^e\in H^2(I_e)$,  and satisfying the following Kirchhoff-type condition:
$$\sum _{e\in E: T(e)=v} u^e_x(l_e-)-\sum _{e\in E: I(e)=v}u_x^e(0+)=0 \quad \text{for all} \ v\in V.$$
It is easy to verify that $(\Delta_\Gamma, D(\Delta_\Gamma ) )$ is a linear, unbounded, self-adjoint, dissipative operator on $\LL^2(\Gamma)$, i.e. 
$\Re (\Delta_\Gamma\bu,\bu)_{\LL^2(\Gamma)}\leq 0$ for all $\bu\in D(\Delta_\Gamma)$. 

Let us consider the LSE on  $\Gamma$:
\begin{equation}\label{eq.tree}
\left\{
\begin{array}{ll}
i\bu _t(t,x)+\Delta_\Gamma \bu(t,x)=0,& x\in \Gamma, t\neq 0 ,\\[10pt]
\bu(0)=\bu_0,&   x\in \Gamma.
\end{array}
\right.
\end{equation}

Using  the properties of the  operator $i\Delta_\Gamma$ 
 we obtain as a consequence of the Hille-Yosida theorem the following well-posedness result.
\begin{theo}\label{existence}
For any $\bu_0\in D(\Delta_\Gamma)$ there exists a unique solution $\bu(t)$ of system \eqref{eq.tree} that satisfies
$$\bu\in C(\R,D(\Delta_\Gamma))\cap C^1(\R,\LL^2(\Gamma)).$$
Moreover, for any $\bu_0\in \LL^2(\Gamma)$, there exists a unique solution $\bu\in C(\R,\LL^2(\Gamma))$ that satisfies
$$\|\bu(t)\|_{\LL^2(\Gamma)}=\|\bu_0\|_{\LL^2(\Gamma)}\quad \text{for all}\ t\in \R.$$
\end{theo}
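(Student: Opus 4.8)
The plan is to recognize $\{e^{it\Delta_\Gamma}\}_{t\in\R}$ as the unitary group generated by the skew\-adjoint operator $i\Delta_\Gamma$ and then to read off both assertions from the abstract theory of semigroups. We have just verified that $(\Delta_\Gamma,D(\Delta_\Gamma))$ is self\-adjoint and dissipative on $\LL^2(\Gamma)$; since self\-adjointness forces $(\Delta_\Gamma\bu,\bu)_{\LL^2(\Gamma)}\in\R$, dissipativity merely says $\Delta_\Gamma\leq 0$. Hence, for every $\bu\in D(\Delta_\Gamma)$,
\[
\Re(\pm i\Delta_\Gamma\bu,\bu)_{\LL^2(\Gamma)}=\Re\bigl(\pm i\,(\Delta_\Gamma\bu,\bu)_{\LL^2(\Gamma)}\bigr)=0,
\]
so both $i\Delta_\Gamma$ and $-i\Delta_\Gamma$ are dissipative operators.

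The only remaining hypothesis in the Lumer--Phillips (equivalently Hille--Yosida) theorem is a range condition, namely that $I\mp i\Delta_\Gamma\colon D(\Delta_\Gamma)\to\LL^2(\Gamma)$ is onto. This is immediate: $\pm i$ lie in the resolvent set of the self\-adjoint operator $\Delta_\Gamma$, since the spectrum of a self\-adjoint operator is real. Therefore $i\Delta_\Gamma$ and $-i\Delta_\Gamma$ each generate a $C_0$\-semigroup of contractions on $\LL^2(\Gamma)$; assembling them yields a $C_0$\-group $\{e^{it\Delta_\Gamma}\}_{t\in\R}$ consisting of isometries, which in particular gives $\|e^{it\Delta_\Gamma}\bu_0\|_{\LL^2(\Gamma)}=\|\bu_0\|_{\LL^2(\Gamma)}$ for all $t\in\R$.

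The two statements then follow in the usual way. For $\bu_0\in D(\Delta_\Gamma)$ the function $\bu(t):=e^{it\Delta_\Gamma}\bu_0$ takes values in $D(\Delta_\Gamma)$, is continuous into $D(\Delta_\Gamma)$ for the graph norm, is $C^1$ into $\LL^2(\Gamma)$, and satisfies $\bu_t=i\Delta_\Gamma\bu$, i.e. \eqref{eq.tree}; uniqueness follows because, if $\bv$ is another solution in that class, then for fixed $t$ the map $s\mapsto e^{i(t-s)\Delta_\Gamma}\bv(s)$ has vanishing derivative, hence is constant, forcing $\bv(t)=e^{it\Delta_\Gamma}\bu_0$. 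For general $\bu_0\in\LL^2(\Gamma)$ one approximates by $\bu_0^n\in D(\Delta_\Gamma)$: the isometry property makes $\{e^{it\Delta_\Gamma}\bu_0^n\}_n$ Cauchy in $C(\R,\LL^2(\Gamma))$ uniformly on compact time intervals, and its limit is the unique solution in $C(\R,\LL^2(\Gamma))$, inheriting the conservation of the $\LL^2$\-norm. I do not expect a genuine obstacle here: the whole substance lies in the self\-adjointness of $\Delta_\Gamma$ with the Kirchhoff conditions, already established above, and the single point demanding a line of care is the surjectivity of $I\mp i\Delta_\Gamma$, which is itself automatic from self\-adjointness. (Equivalently, one may quote Stone's theorem directly, it being precisely the specialization of Hille--Yosida to skew\-adjoint generators.)
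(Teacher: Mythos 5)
Your proposal is correct and follows essentially the same route as the paper, which likewise obtains the result by applying the Hille--Yosida theory to the skew-adjoint operator $i\Delta_\Gamma$ and deduces the $\LL^2(\Gamma)$-isometry from $\Re(i\Delta_\Gamma\bu,\bu)_{\LL^2(\Gamma)}=0$. You simply spell out the details (dissipativity of $\pm i\Delta_\Gamma$, the range condition via self-adjointness, and the density argument for general $\bu_0\in\LL^2(\Gamma)$) that the paper leaves implicit.
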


The $\LL^2(\Gamma)$-isometry property is a consequence of the fact that the operator $i\Delta_\Gamma$ satisfies
$\Re(i\Delta_\Gamma \bu,\bu)_{\LL^2(\Gamma)}=0$ for all $\bu\in D(\Delta _\Gamma)$. \\

For any $\bu_0\in D(\Delta_\Gamma)$ system \eqref{eq.tree} can be written in an explicit way as follows: 
\begin{equation}\label{system1exp}
\left\{
\begin{array}{ll}
u^e\in C(\R,H^2(I_e))\cap C^1(\R,L^2(I_e)),& e\in E,\\[10pt]
iu^e_t(t,x)+\Delta u^e(t,x)=0,& x\in I_e, t\neq 0, \\[10pt]
\text{for all} \ v\in V, \ u^{e}(t,j(v,e))=u^{e'}(t,j(v,e')),& \forall e,e'\in E_v, t\neq 0,\\[10pt]
\displaystyle\sum _{e\in E: T(e)=v} u^e_x(t,l_e-)-\sum _{e\in E: I(e)=v}u_x^e(t,0+)=0 & \text{for all} \ v\in V.
\end{array}
\right.
\end{equation}

\medskip

Let us now consider the laminar case. We consider $\sigma$ a piecewise constant function on each edge of the tree such that there exist two positive constants $\sigma_1$ and $\sigma_2$ such that
$$0<\sigma_1<\sigma (x)<\sigma_2, \quad \forall\ x\in I_e, \ \forall e\in E. $$ 
With a similar argument as before we introduce the operator $\Delta_{\sigma,\Gamma}$\ as follows
$$D(\Delta_{\sigma,\Gamma})=\Big\{\bu=\{u^e\}_{e\in E}\in H^2(\Gamma): \sum _{e\in E_v} \sigma (j(v,e))\frac {\partial u^e}{\partial n_e}(j(v,e))=0\quad 
\text{for all}\ v\in V\Big\}$$
and
$$(\Delta_{\sigma,\Gamma} \bu)^e=\partial_x(\sigma (\cdot)\partial _x(u^e))\quad \text{for all}\ e\in E, \bu\in D(\partial_x (\sigma \partial _x)).$$

It follows that for any $\bu_0\in  D(\Delta_{\sigma,\Gamma})$ the following system is well-posed
\begin{equation}\label{system2}
\left\{
\begin{array}{ll}
u^e\in C(\R,H^2(I_e))\cap C^1(\R,\LL^2(I_e)),& e\in E,\\[10pt]
iu^e_t(t,x)+\partial _x(\sigma (x)\partial _x u^e)(t,x)=0,& x\in I_e, t\neq 0, \\[10pt]
\text{for all} \ v\in V, \ u^{e}(t,j(v,e))=u^{e'}(t,j(v,e')),& \forall e,e'\in E_v, t\neq 0,\\[10pt]
\displaystyle\sum _{e\in E: T(e)=v} \sigma (l_e-)u^e_x(t,l_e-)=\sum _{e\in E: I(e)=v}\sigma (0+)u_x^e(t,0+) & \text{for all} \ v\in V.
\end{array}
\right.
\end{equation}
We remark that when function $\sigma$ is identically equal to one we obtain the previous system \eqref{system1exp}.

\section{The constant coefficient case}\label{const}
\subsection{The description of the solution}

For $\omega\geq 0$ let $R_\omega$ 
be the resolvent of the Laplacian on a tree
$$R_{\omega}\bold f=(-\Delta_{\Gamma}+\omega ^2I)^{-1}\bold f.$$

We shall prove in Lemma \ref{ancontres} that $\omega R_\omega \bold f(x)$ can be analytically continued in a region containing the imaginary axis. 
Therefore we can use a spectral calculus argument to write the solution of the Schr\"odinger equation with initial data $\bu_0$ as
\begin{equation}\label{SL}
e^{it\Delta_\Gamma}\bu_0(x)=
\int_{-\infty}^{\infty}e^{it\tau^2}\tau R_{i\tau}\bu_0(x)\frac{d\tau}{\pi}.
\end{equation}
We shall also obtain in Lemma \ref{ressumlemma} that the following decomposition holds
\begin{equation}\label{sumexp}
\tau R_{i\tau} \bu_0(x)=
\sum_{\lambda\in\mathbb R} b_\lambda e^{i\tau \psi_\lambda(x)}\int_{I_\lambda}\bu_0(y)
e^{i\tau \beta_\lambda y}dy,
\end{equation}
with $\psi_\lambda (x),\beta_\lambda\in\mathbb R$,  $I_\lambda\in\{I_e\}_{e\in E}$ and $\sum_{\lambda\in\mathbb R} |b_\lambda|<\infty$. Then decomposition \eqref{LSexpr} is implied by \eqref{SL}, \eqref{sumexp} and the fact that for $t>0$ and $r\in\mathbb R$
$$\int_{-\infty}^{\infty}e^{it\tau^2}e^{i\tau r}d\tau=e^{i\frac\pi 4}\sqrt\pi\frac{e^{-\frac{r^2}{4t}}}{\sqrt{t}}.$$
From \eqref{LSexpr} the dispersion estimate \eqref{dispersion} of Theorem \ref{disp} follows immediately since $\sum_{\lambda\in\mathbb R} |\alpha_\lambda|<\infty$.

Above and in what follows the integration of function $\bold {f}=(f^e)_{e\in E}$ on interval $I_e$ means the integral of $f^e$ on the considered interval. 

\begin{rem}
As in \cite{Valeria} we notice that since we can express the solution of the wave equation $\bold v_{tt}-\Delta_\Gamma \bold v=0$ with initial data $(\bold v_0,0)$ as
$$\bold v(t,x)=\int_{-\infty}^{\infty}e^{it\tau}R_{i\tau}\bold v_0(x)i\tau
\frac{d\tau}{2\pi},$$
the property
$$ \sup_{x\in\Gamma} \int_{-\infty}^{\infty}\mo{\bold v(t,x)}dt\leq C\|
\bold v_0\|_{\LL^1(\Gamma)}$$
follows similarly. Let us mention here that the wellposedness of  a class of nonlinear dispersive waves on trees, the Benjamin-Bona-Mahony equation, has been investigated in \cite{MR2500096}.
\end{rem}

\subsection{Structure of the resolvent}

In order to obtain the expression of the resolvent second-order equations 
$$(R_\omega \bold f)''=\omega^2 R_\omega \bold f-\bold f$$ 
must be solved on each edge of the tree together with coupling conditions at each vertex. Then, on each edge parametrized by $I_e$,
$$R_\omega \bold f(x)=ce^{\omega x}+\tilde ce^{-\omega
x}+\frac{1}{2\omega}\int_{I_e}\bold f(y)\,e^{-\omega
\mo{x-y}}dy, \ x\in I_e.$$
Since $R_\omega \bold f$ belongs to $\LL^2(\Gamma)$ the coefficients $c$'s are zero on the  infinite edges $e\in\mathcal E$, parametrized by $[0,\infty)$. If we denote by $\mathcal I$ the set of internal edges, we have $2|\mathcal I|+|\mathcal E|$ coefficients. The Kirchhoff conditions of continuity of $R_\omega \bold f$ and 
of transmission of $\partial_xR_\omega \bold f$ at the vertices of the tree give the system of 
equations on the coefficients. We have the same number of equations as the number of unknowns. We denote $D_\Gamma$ the matrix of the system, whose elements are real powers of $e^{\omega}$.

Therefore the resolvent $R_\omega \bold f(x)$ is a finite sum of terms :  
\begin{equation}\label{ressum}
R_\omega \bold f(x)=\frac{1}{\omega\,\det D_\Gamma(\omega)}
\sum_{\lambda=1}^{N(\Gamma)} c_\lambda e^{\pm\omega \Phi_\lambda(x)}\int_{I_\lambda} \bold f(y)
\,e^{\pm\omega y}dy+\frac{1}{2\omega}\int_{I_e}\bold f(y)\,e^{-\omega \mo{x-y}}dy,
\end{equation}
where  $x\in I_e$, $\Phi_\lambda (x)\in\mathbb R$, $I_\lambda \in\{I_e\}_{e\in E}$ and $|N(\Gamma)|<\infty$. We shall prove the following proposition that will imply Lemma \ref{ancontres} and \ref{ressumlemma} needed for obtaining Theorem \ref{disp}.

\begin{prop}\label{propdet}Function $\det D_\Gamma(\omega)$ is lower bounded by a positive constant on a strip containing the imaginary axis:
$$\exists c_\Gamma,\epsilon_\Gamma>0,\,\,|\det D_\Gamma(\omega)|>c_\Gamma,\forall\omega\in\mathbb C, |\Re\omega|<\epsilon_\Gamma.$$
\end{prop}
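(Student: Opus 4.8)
The plan is to prove Proposition \ref{propdet} by induction on the number of vertices of the tree $\Gamma$, peeling off one ``leaf subtree'' at a time and controlling how $\det D_\Gamma(\omega)$ transforms under this reduction. The base case is a single vertex with $d$ infinite edges attached (a star graph): here the matrix $D_\Gamma(\omega)$ is explicit — continuity across the central vertex gives $d-1$ equations identifying the coefficients of the $e^{-\omega x_e}$ terms, and the Kirchhoff transmission condition gives one more. A direct computation shows $\det D_\Gamma(\omega)$ equals $\pm d$ (or a nonzero integer), a constant independent of $\omega$, so the bound is trivial. More generally, the base of the induction will be any tree all of whose internal edges have already been removed; the point is that when no $e^{\omega x}$ exponential is present the matrix entries are genuinely bounded and the determinant is a fixed nonzero number.

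For the inductive step I would fix a vertex $v$ all of whose descendant edges are infinite (such a vertex exists in any finite tree: take a vertex at maximal distance from the root among those adjacent to an internal edge), and let $e_0$ be the internal edge joining $v$ to the rest of the tree, of length $\ell_0$. The idea is that the two Kirchhoff equations at $v$ (continuity between $e_0$ and the infinite edges, and transmission) together with the fact that on the infinite edges only $e^{-\omega x_e}$ survives, let us solve for the coefficients of $e_0$ at the endpoint $v$ in terms of the data and of the coefficient of $e^{\omega x}$ on $e_0$. Propagating along $e_0$ to its other endpoint, one obtains an effective boundary condition at that endpoint that is identical in form to the ones appearing for a tree with one fewer vertex, except that the matrix entries get multiplied by factors $e^{\pm\omega\ell_0}$ and shifted by the ``impedance'' term coming from the star at $v$. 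Carrying out the corresponding row/column operations on $D_\Gamma(\omega)$, one finds
\begin{equation*}
\det D_\Gamma(\omega)=p(e^{-\omega\ell_0})\,\det D_{\Gamma'}(\omega),
\end{equation*}
where $\Gamma'$ is the tree obtained by deleting $v$ and its infinite children and turning $e_0$ into an infinite edge, and $p$ is an explicit polynomial (essentially $1$ plus a multiple of $e^{-2\omega\ell_0}$, with the multiple having modulus bounded by a constant $<1$ for $|\Re\omega|$ small because $|e^{-2\omega\ell_0}|$ is close to $1$ and the combinatorial coefficient is controlled — the reflection coefficient at a Kirchhoff star has modulus $\le$ something like $(d-2)/d<1$). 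Hence $|p(e^{-\omega\ell_0})|$ is bounded below by a positive constant on a thin strip, and combining with the inductive hypothesis for $\Gamma'$ gives the claim, with $\epsilon_\Gamma$ the minimum of the $\epsilon$'s produced along the way.

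The main obstacle I anticipate is making the reduction $\det D_\Gamma(\omega)=p(e^{-\omega\ell_0})\det D_{\Gamma'}(\omega)$ clean and, crucially, getting the lower bound on $|p|$ uniform on a strip rather than just on the imaginary axis. On the imaginary axis $\omega=i\tau$ one has $|e^{-\omega\ell_0}|=1$ and the argument reduces to showing a trigonometric polynomial with a strictly dominant constant term never vanishes, which is where the Kirchhoff structure (self-adjointness, the fact that the ``reflection'' contributions are a strict convex average) enters; extending to $|\Re\omega|<\epsilon_\Gamma$ is then a perturbation/continuity argument, shrinking $\epsilon_\Gamma$ as needed, using that there are only finitely many internal edges so finitely many exponentials $e^{\pm\omega\ell_e}$ to control simultaneously. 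A secondary technical point is bookkeeping: one must verify that the row and column operations used to extract the factor $p(e^{-\omega\ell_0})$ do not themselves introduce $\omega$-dependent multipliers that could degenerate — but since all operations are over the ring generated by the $e^{\pm\omega\ell_e}$ and constants, and we only ever multiply rows by units $e^{c\omega}$, the determinant changes only by such units, which are harmless for the modulus bound on the strip. I would also remark that exactly the same induction handles the laminar case of Section \ref{slaminar}, since piecewise-constant $\sigma$ merely subdivides edges and rescales the transmission coefficients by the fixed positive numbers $\sigma(j(v,e))$, leaving the structure of $D_\Gamma(\omega)$ — and hence the proof — intact.
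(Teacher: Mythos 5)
Your overall strategy --- induction on the number of vertices, peeling off a leaf vertex $v$ together with its $m$ infinite edges and turning the internal edge $e_0$ of length $a$ into an infinite edge of the reduced tree $\Gamma'$ --- is exactly the one used in the paper, and your base case is correct ($\det D_{\Gamma}=m$ for a star). The genuine gap is in the inductive step: the claimed factorization $\det D_\Gamma(\omega)=p(e^{-\omega a})\det D_{\Gamma'}(\omega)$ with $p$ an \emph{explicit, local} polynomial does not hold. If you carry out the elimination you describe, the Kirchhoff conditions at $v$ force the coefficient of $e^{\omega x}$ on $e_0$ to be $-\frac{m-1}{m+1}e^{-2\omega a}$ times the coefficient of $e^{-\omega x}$; the column of the system associated with $e_0$ then becomes a linear combination of two columns, and multilinearity of the determinant yields
\begin{equation*}
\det D_{\Gamma}(\omega)=(m+1)e^{\omega a}\det D_{\Gamma'}(\omega)-(m-1)e^{-\omega a}\det \tilde D_{\Gamma'}(\omega),
\end{equation*}
where $\tilde D_{\Gamma'}$ is the matrix obtained by replacing $e^{-\omega x}$ with $e^{\omega x}$ on the newly infinite edge of $\Gamma'$. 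Thus the ``multiple of $e^{-2\omega a}$'' in your $p$ is $\frac{m-1}{m+1}\cdot\frac{\det\tilde D_{\Gamma'}(\omega)}{\det D_{\Gamma'}(\omega)}$: it is not the reflection coefficient of the local star at $v$ but that of the \emph{entire remaining tree}, a genuinely $\omega$-dependent almost-periodic function. The inductive hypothesis $|\det D_{\Gamma'}(\omega)|>c$ alone gives no upper bound on this ratio, so nothing prevents the correction term from cancelling the main term.

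To close the induction one must strengthen the hypothesis to the ``double property'' used in the paper: besides $|\det D_{\Gamma}(\omega)|>c_\Gamma$ on the strip, one propagates the bound $\left|\det\tilde D_\Gamma(\omega)/\det D_\Gamma(\omega)\right|<r_\Gamma<1$. The base case gives the constant ratio $(m-2)/m<1$, and the inductive step requires computing $\det\tilde D_{\Gamma}$ by the same expansion and verifying the elementary M\"obius-map estimate
\begin{equation*}
\left|\frac{\tfrac{m-1}{m+1}-\tfrac{m-3}{m+1}z}{1-\tfrac{m-1}{m+1}z}\right|<1
\quad\Longleftrightarrow\quad
0<(m-2)\left(|z|^2-1\right)+2(m-1)\left(1-\Re z\right),
\end{equation*}
which holds for $|z|<1$. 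This second half is the real content of the proof and is absent from your proposal: your remark that ``the reflection coefficient at a Kirchhoff star has modulus $\le (d-2)/d<1$'' is the right intuition, but it must be upgraded to a uniform statement about whole subtrees and itself proved by induction. With that addition (and the same bookkeeping in the laminar case, where the analogous M\"obius estimate appears with the coefficients $1/b_j$), your argument becomes the paper's.
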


\begin{lemma}\label{ancontres} Function $\omega R_\omega \bold f(x)$ can be analytically continued in a region containing the imaginary axis.
\end{lemma}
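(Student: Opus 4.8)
The plan is to deduce Lemma~\ref{ancontres} directly from Proposition~\ref{propdet} together with the explicit formula \eqref{ressum} for the resolvent. First I would observe that the right-hand side of \eqref{ressum}, multiplied by $\omega$, consists of two types of contributions. The ``local'' term $\frac12\int_{I_e}\bold f(y)\,e^{-\omega|x-y|}\,dy$ is, for fixed $x\in I_e$ and $\bold f$ compactly supported (or merely $\LL^1$), an entire function of $\omega$: the integrand $e^{-\omega|x-y|}$ is entire in $\omega$ for each $y$, and one differentiates under the integral sign, the dominated-convergence hypothesis being supplied by $|e^{-\omega|x-y|}|=e^{-(\Re\omega)|x-y|}\le e^{|\Re\omega|\,l_e}$ on any bounded strip when $I_e$ is finite, and by the exponential decay when $I_e$ is infinite and $\Re\omega>0$, extended by continuity. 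So no singularity can come from that piece.

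The ``global'' terms have the shape $\dfrac{c_\lambda}{\det D_\Gamma(\omega)}\,e^{\pm\omega\Phi_\lambda(x)}\displaystyle\int_{I_\lambda}\bold f(y)\,e^{\pm\omega y}\,dy$. The exponentials $e^{\pm\omega\Phi_\lambda(x)}$ are entire in $\omega$, and the integral $\int_{I_\lambda}\bold f(y)e^{\pm\omega y}\,dy$ is again entire in $\omega$ by the same differentiation-under-the-integral argument (using that $\bold f\in\LL^1$ and, on the infinite edges, that the $c_\lambda=0$ there so the surviving exponentials are the decaying ones, or that one works on the strip where the relevant combination stays integrable). Hence numerator and each summand's analytic part are entire; the only possible obstruction to analyticity of $\omega R_\omega\bold f(x)$ is the vanishing of the denominator $\det D_\Gamma(\omega)$. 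But Proposition~\ref{propdet} asserts precisely that $|\det D_\Gamma(\omega)|\ge c_\Gamma>0$ on the strip $\{|\Re\omega|<\epsilon_\Gamma\}$, so $1/\det D_\Gamma(\omega)$ is holomorphic there. Therefore $\omega R_\omega\bold f(x)$, being a finite sum of products and quotients of functions holomorphic on $\{|\Re\omega|<\epsilon_\Gamma\}$ with nonvanishing denominator, extends holomorphically to that strip, which contains the imaginary axis.

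A couple of technical points I would be careful about. One is the precise function class for $\bold f$: to make the integrals entire one wants $\bold f\in\LL^1(\Gamma)$ with, say, compact support (or at least enough decay on the infinite edges); since the ultimate goal is the dispersion estimate \eqref{dispersion} against $\|\bold u_0\|_{\LL^1}$, it is harmless to first establish everything for $\bold f$ in a dense class and pass to the limit, so I would state the lemma for such $\bold f$ and remark that the bounds are uniform. The second is that the formula \eqref{ressum} was derived for $\omega\ge 0$ real by an ODE-plus-matrix computation; I would note that the same algebra (solving $(R_\omega\bold f)''=\omega^2R_\omega\bold f-\bold f$ edge-by-edge, imposing $\LL^2$ integrability on the rays, and matching the Kirchhoff conditions) goes through verbatim for complex $\omega$ with $\Re\omega>0$, giving a formula that agrees with the resolvent and is manifestly meromorphic; the content of the lemma is then the analytic continuation across $\Re\omega=0$, which is exactly what the non-vanishing of $\det D_\Gamma$ on the strip provides. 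The only real obstacle in this lemma is thus Proposition~\ref{propdet} itself, which is proved separately; granting it, the proof of Lemma~\ref{ancontres} is the short bookkeeping argument above.
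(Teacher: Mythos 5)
Your proposal is correct and follows exactly the paper's route: the paper's proof of Lemma~\ref{ancontres} is the one-line observation that the claim follows from the decomposition \eqref{ressum} and Proposition~\ref{propdet}, and your write-up simply fills in the (straightforward) details of why each term is entire and why the non-vanishing of $\det D_\Gamma(\omega)$ on the strip is the only issue. No discrepancy to report.
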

\begin{proof}The proof is an immediate consequence of decomposition \eqref{ressum} and of Proposition \ref{propdet}.
\end{proof}

\begin{lemma}\label{ressumlemma}The following decomposition holds
$$\tau R_{i\tau} \bu_0(x)=
\sum_{\lambda\in\mathbb R} b_\lambda e^{i\tau \psi_\lambda(x)}\int_{I_\lambda}\bu_0(y)
e^{i\tau \beta_\lambda y}dy,$$
with $\psi_\lambda (x),\beta_\lambda\in\mathbb R$,  $I_\lambda \in\{I_e\}_{e\in E}$ and $\sum_{\beta\in\mathbb R} |b_\lambda|<\infty$.
\end{lemma}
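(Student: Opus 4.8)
The plan is to substitute $\omega=i\tau$, $\tau\in\mathbb R$, directly into the closed form \eqref{ressum} of the resolvent. Multiplying \eqref{ressum} by $\omega$ cancels the prefactor $1/\omega$, so that
$$\tau R_{i\tau}\bu_0(x)=\frac1{i\,\det D_\Gamma(i\tau)}\sum_{\lambda=1}^{N(\Gamma)}c_\lambda\,e^{\pm i\tau\Phi_\lambda(x)}\int_{I_\lambda}\bu_0(y)\,e^{\pm i\tau y}\,dy\;+\;\frac1{2i}\int_{I_e}\bu_0(y)\,e^{-i\tau|x-y|}\,dy,$$
the last ``local'' term being, after splitting $e^{-i\tau|x-y|}$ according to the sign of $x-y$ (so that it equals $e^{-i\tau(x-y)}$ or $e^{i\tau(x-y)}$), a finite sum of contributions of the form $b\,e^{i\tau\psi(x)}\int\bu_0(y)e^{i\tau\beta y}dy$ with $\psi(x)=\mp x\in\mathbb R$ and $\beta=\pm1\in\mathbb R$ -- exactly the free-evolution-type contribution on $I_e$ as in \cite{Valeria} (the associated integrals run over the sub-intervals of $I_e$ cut out by $x$, which is harmless for \eqref{dispersion}). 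Everything therefore reduces to the first, ``global'', term, where the only factor that is not already a pure exponential in $\tau$ is $1/\det D_\Gamma(i\tau)$.

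The core step is to expand
$$\frac1{\det D_\Gamma(i\tau)}=\sum_{m}d_m\,e^{i\tau\nu_m},\qquad\nu_m\in\mathbb R,\qquad\sum_m|d_m|<\infty,$$
and this is where the framework of almost-periodic functions enters. By construction the entries of $D_\Gamma$ are monomials $e^{\omega a}$ with $a\in\mathbb R$ (they arise by evaluating $c_ee^{\omega x}+\tilde c_ee^{-\omega x}$ and its derivative at $x\in\{0,l_e\}$), hence $\det D_\Gamma(\omega)$ is a finite exponential sum $\sum_k a_k e^{\omega\mu_k}$ with $\mu_k\in\mathbb R$; on the imaginary axis $\tau\mapsto\det D_\Gamma(i\tau)=\sum_k a_k e^{i\tau\mu_k}$ is a generalized trigonometric polynomial, so it lies in the Bohr--Wiener algebra of uniformly almost-periodic functions with absolutely summable Fourier coefficients. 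Proposition \ref{propdet}, applied along $\omega=i\tau$, gives $\inf_{\tau\in\mathbb R}|\det D_\Gamma(i\tau)|\ge c_\Gamma>0$, and a uniformly almost-periodic function that is bounded away from zero is invertible in that algebra -- the analogue for almost-periodic functions of Wiener's $1/f$ theorem, which one sees through the identification of the maximal ideal space of the Bohr--Wiener algebra with the Bohr compactification of $\mathbb R$. This yields the expansion above, and I expect this to be the only genuinely delicate point: it rests on $\det D_\Gamma$ being a true finite exponential sum with real exponents (so that the almost-periodic calculus applies, ordinary Fourier analysis being unavailable on the tree) together with the non-vanishing supplied by Proposition \ref{propdet}.

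Finally I would substitute the expansion of $1/\det D_\Gamma(i\tau)$ into the global term and multiply out: each product $d_m e^{i\tau\nu_m}\cdot c_\lambda e^{\pm i\tau\Phi_\lambda(x)}$ becomes $c_\lambda d_m\,e^{i\tau(\nu_m\pm\Phi_\lambda(x))}$, a pure exponential with real phase $\psi(x)=\nu_m\pm\Phi_\lambda(x)$, paired with $\int_{I_\lambda}\bu_0(y)e^{\pm i\tau y}dy$, i.e.\ $\beta=\pm1$ and $I_\lambda\in\{I_e\}_{e\in E}$. Collecting these with the finitely many local terms and re-indexing the triples $(\lambda,m,\pm)$ by a single parameter, with $b_\lambda=c_\lambda d_m/i$, one obtains precisely the stated form, and
$$\sum_\lambda|b_\lambda|\;\le\;\Big(\sum_{\lambda=1}^{N(\Gamma)}|c_\lambda|\Big)\Big(\sum_m|d_m|\Big)+C<\infty,$$
since the $\lambda$-sum in \eqref{ressum} is finite while the $d_m$-series converges absolutely, $C$ accounting for the finitely many bounded local terms. (Lemmas \ref{ancontres} and \ref{ressumlemma} then feed \eqref{SL}--\eqref{sumexp} to give Theorem \ref{disp}.)
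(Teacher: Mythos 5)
Your proposal is correct and follows essentially the same route as the paper: the paper also observes that $\det D_\Gamma(i\tau)$ is a finite exponential sum with real frequencies, invokes Proposition \ref{propdet} together with the Wiener-type inversion theorem in the algebra of almost-periodic functions with absolutely convergent exponential series (the paper cites \S29, Cor.~1 of \cite{Ge}, which is precisely the Bohr--Wiener/Gelfand statement you describe), and then substitutes into \eqref{ressum}. Your treatment of the local term by splitting $e^{-i\tau|x-y|}$ according to the sign of $x-y$ is a detail the paper leaves implicit, but it is the intended reading.
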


\begin{proof}
We notice that for $\tau\in\mathbb R$, $\det D_\Gamma(i\tau)$ is a finite sum of powers of $e^{i\tau}$. Then, by Proposition \ref{propdet} we are in the framework of  a classical theorem in representation theory (\S29, Cor.1 of \cite{Ge}) that asserts that the inverse of $\det D_\Gamma(i\tau)$ is $\sum_{\lambda\in\mathbb R} d_\lambda e^{i\tau \lambda}$ with $\sum_{\lambda\in\mathbb R} |d_\lambda|<\infty$, and from \eqref{ressum} the Lemma follows.
\end{proof}

The rest of this section is the proof of Proposition \ref{propdet}. We shall show by recursion on the number of vertices the following stronger ``double" property:
$$P(n):\,\,\,\,\mbox{If }\Gamma \mbox{ has $n$ vertices, we have the property }\mathcal P,$$
$$\mathcal P : \exists c_\Gamma,\epsilon_\Gamma>0,\exists 0<r_\Gamma<1,\,\,|\det D_\Gamma(\omega)|>c_\Gamma,\,\left|\frac{\det \tilde D_\Gamma(\omega)}{\det D_\Gamma(\omega)}\right|<r_{\Gamma},\,\forall\omega\in\mathbb C, |\Re\omega|<\epsilon_\Gamma.$$
We have denoted by $\tilde D_\Gamma(\omega)$ the matrix of the system verified by the coefficients, if we impose that on one of the last infinite edges  $l\in\mathcal E$ we replace in the expression of the resolvent $\tilde ce^{-\omega x}$ by $ce^{\omega x}$.

\subsection{Proof of $P(1)$} In this case we have  a star-shaped tree with  $m\geq 3$ of edges.
All the edges are parametrized by $[0,\infty)$. In particular $D_\Gamma(\omega)=D_\Gamma$. We shall actually prove a stronger property, which implies the property $\mathcal P$ for any $\epsilon_\Gamma>0$:
$$P(1,m): \,\,\,\,\mbox{If }\Gamma \mbox{ has 1 vertex and $m$ edges, }\det D_\Gamma(\omega)=m\, \mbox{and} \, \det \tilde D_\Gamma(\omega)=m-2.$$

The resolvent contains  $\tilde c_{j}e^{-\omega x}$, $1\leq j\leq m$, on each of the external edges. We write matrix $D_{\Gamma_m}$ such that the last line is coming from  Kirchhoff derivative condition, and that the other lines are coming from  Kirchhoff continuity condition. So matrix $D_{\Gamma_m}$ can be written such that it has components $1$ on the last line, and on the principal diagonal,  $d_{i,i+1}=-1$ for $1\leq j\leq m-1$, and zeros elsewhere
$$D_{\Gamma^m}=\left(\begin{array}{cccccccc}1 & -1 & &&&&&\\  & 1 & -1&&&&&\\ &&.&.&&&&\\&&&.&.&&&\\&&&&.&.&&\\&&&&&1&-1&\\&&&&&&1&-1\\1&1&1&1&1&1&1&1 \end{array}\right).$$
By developement with respect to the last column and $P(1,m-1)$,
$$\det D_{\Gamma^m}=1+\det D_{\Gamma^{m-1}}=m.$$
Similarly,
$$\det \tilde D_{\Gamma^m}=\left|\begin{array}{cccccccc}1 & -1 & &&&&&\\  & 1 & -1&&&&&\\ &&.&.&&&&\\&&&.&.&&&\\&&&&.&.&&\\&&&&&1&-1&\\&&&&&&1&-1\\1&1&1&1&1&1&1&-1 \end{array}\right|=1+\det  \tilde D_{\Gamma^{m-1}}=m-2,$$
so $P(1,m)$ is proven for any $m\geq 3$ and implicitly $P(1)$.

\subsection{Proof of $P(n-1)\Rightarrow P(n)$.}\label{rec} Any tree $\Gamma_n$ with $n$ vertices, $n\geq 2$, can be seen as a tree $\Gamma_{n-1}$ with $n-1$ vertices on which we add an extra-vertex. More precisely, let us consider a vertex $v$ from which  there start $m\geq 2$ external infinite edges and one internal edge connecting it to the rest of the tree (see Fig. \ref{fig:1}). {Let us notice that such a choice is possible since the graph has no cycles}. In particular the edge whose lower extremity is this vertex $v$ is an internal edge $l$, whose length should be denoted by $a$, and whose upper vertex we denote by $\tilde v$. Now we remove this vertex and transform the internal edge $l$ into an external infinite one. The new graph $\Gamma_{n-1}$ has $n-1$ vertices. 

\begin{figure}[t]
\includegraphics[width=6cm]{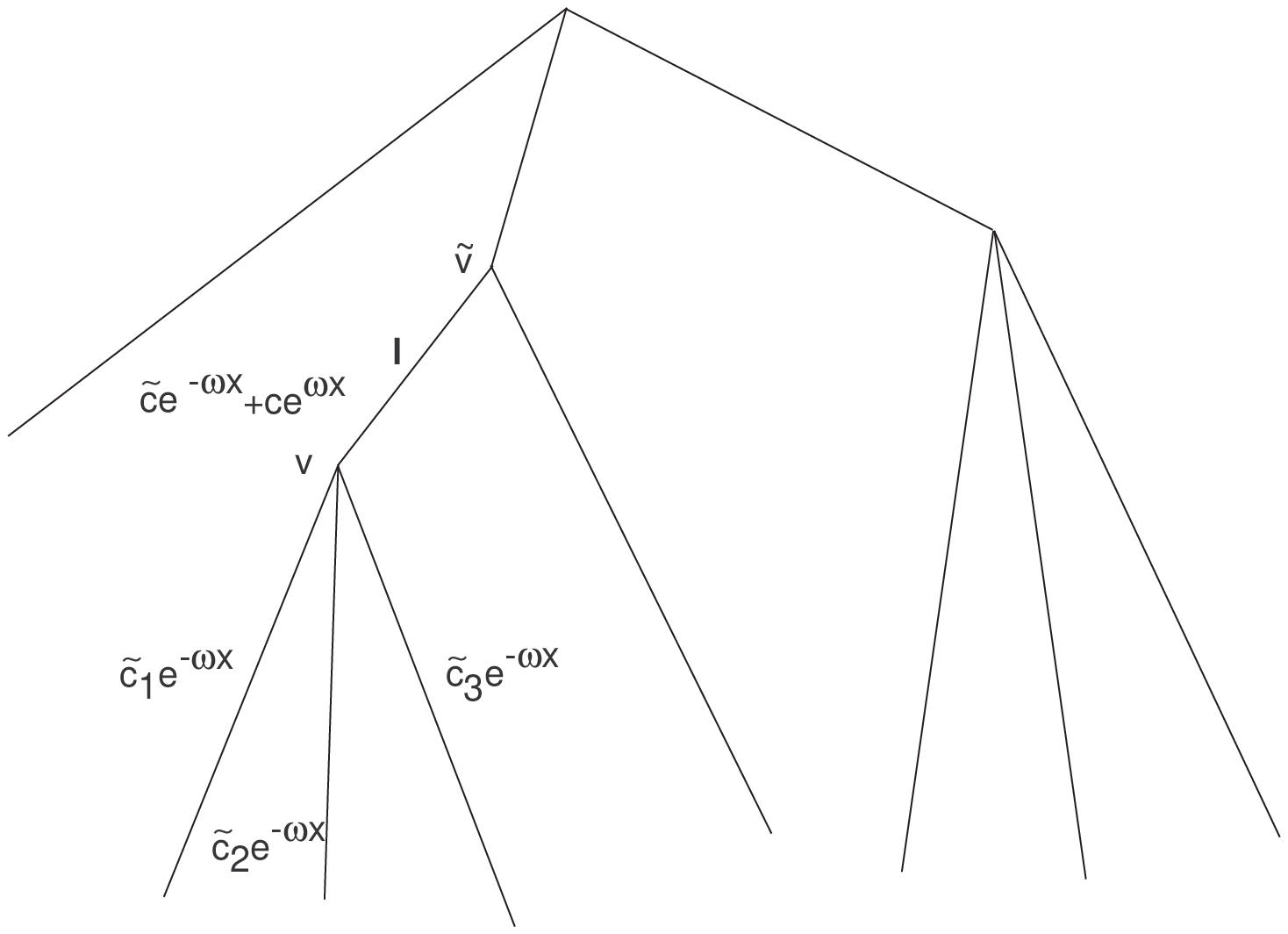}
\includegraphics[width=6cm]{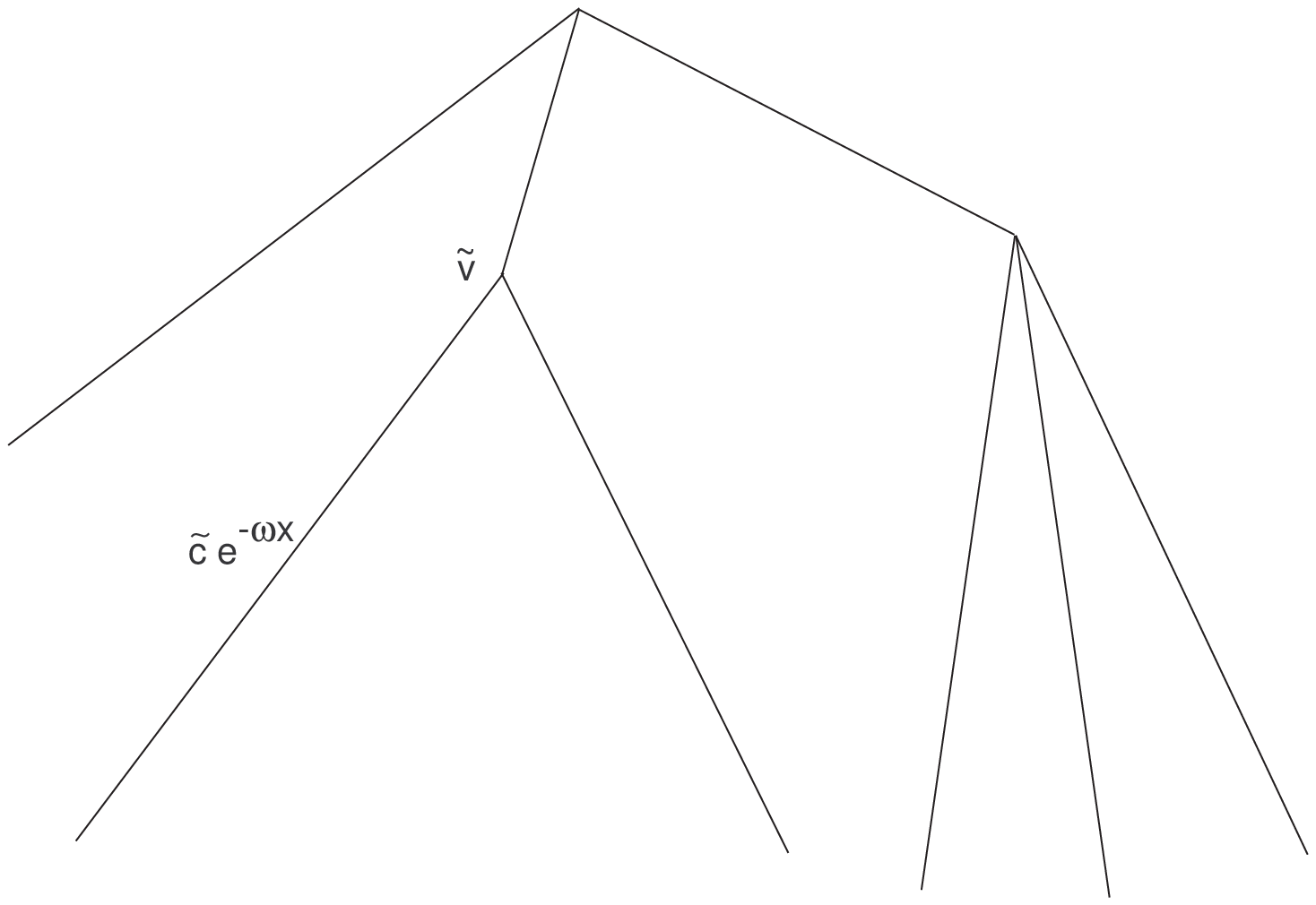}
\caption{With vertex $v$ we obtain tree $\Gamma_{4}$ (left) from $\Gamma_3$ (right).\label{fig:1}}
\end{figure}

With respect to the problem on $\Gamma_{n-1}$, the resolvent on $\Gamma_n$ involves a new term $ ce^{\omega x}$ aside from $\tilde ce^{-\omega x}$ on the interval edge $l$, and on the external edges emerging from the vertex $v$ it involves terms $\tilde c_{j}e^{-\omega x}$, $1\leq j\leq m$. We have also the Kirchhoff conditions at the vertex $v$, which give $m+1$ equations on the coefficients. 

We write the square $N\times N$ matrix $D_{\Gamma_n}$ such that the last $m+2$ column corresponds to the unknowns $\tilde c, c, \tilde c_1,...,\tilde c_m$ . On the last line we write the Kirchhoff derivative condition at the vertex $v$, and on the $N-j$ lines, $1\leq j\leq m$ the Kirchhoff continuity conditions at the vertex $v$. Also, on the $N-m-1$ line we write the derivative condition in the vertex $\tilde v$ and on the $N-m-2$ line the continuity condition in $\tilde v$ relating $\tilde c$, and now also $c$, to the others coefficients.   So $D_{\Gamma_n}$ is a matrix obtained from the $(N-m-1)\times (N-m-1)$ matrix $D_{\Gamma_{n-1}}$ (whose last column corresponds to the unknown $\tilde c$) and from $D_{\Gamma^m}$ (see previous subsection) in the following way
$$(D_{\Gamma_n})_{i,j}=
\left\{\begin{array}{c}
(D_{\Gamma_{n-1}})_{i,j}, 1\leq i,j\leq N-m-1,\\[7pt]
-1, (i,j)\in\{(N-m-2,N-m),(N-m-1,N-m),(N-m,N-m+1)\}, \\[7pt]
e^{\omega a}, (i,j)\in\{(N-m,N-m),(N,N-m)\}, \\[7pt]
e^{-\omega a}, (i,j)=(N-m,N-m-1),\\[7pt]
-e^{-\omega a}, (i,j)=(N,N-m-1),\\[7pt]
(D_{\Gamma^m})_{i,j}, N-m+1\leq i,j\leq N,
\end{array}\right.$$

$$D_{\Gamma_n}=
\left(\begin{array}{cccc|ccccccccc}
&&&&  && & &&&&&\\
&& {D_{\Gamma_{n-1}}}&&&& & &&&&&\\
&&&&-1&& & &&&&&\\
&&&&-1&& & &&&&&\\ \hline
&&&e^{-\omega a}&e^{\omega a}& -1 & &&&&&\\ 
&&&&&1 & -1 & &&&&&\\  &&&&&& 1 & -1&&&&&\\ 
&&&&&&&.&.&&&&\\&&&&&.&.&&&&&\\&&&&&&&&&.&.&&\\&&&&&&&&&&1&-1&\\&&&&&&&&&&&1&-1\\&&&-e^{-\omega a}&e^{\omega a}&1&1&1&1&1&1&1&1 \end{array}\right).$$
In the case $n=1$ we have that $D_{\Gamma_1}=D_{\Gamma^{\tilde m}}$ for some $\tilde m\geq 3$. Also, we emphasize that with the above recursion, matrix $\tilde D_{\Gamma_n}$ is obtained from $D_{\Gamma_n}$ by replacing its $N\times N$ element with $-1$.

We develop $\det D_{\Gamma_n}$ with respect to the last $m+1$ lines, that is as an alternated sum of determinants of $m+1\times m+1$ minors composed from the last $m+1$ lines of $D_{\Gamma_n}$ times the determinant of $D_{\Gamma_n}$ without the lines and columns the minor is made of. The only possibility to obtain a  $m+1\times m+1$ minor composed from the last $m+1$ lines of $D_{\Gamma_n}$ different from zero is to choose one of the columns $N-m-1$ and $N-m$, together with all last $m$ columns. This follows from the fact that if we eliminate from $\det D_{\Gamma_n}$ both columns $N-m-1$ and $N-m$, together with $m-1$ columns among the last $m$ columns, we obtain a block-diagonal type matrix, with first diagonal block $D_{\Gamma_{n-1}}$ with its last column replaced by zeros, so its determinant vanishes. Therefore
$$\det D_{\Gamma_n}=\det D_{\Gamma_{n-1}}\left|\begin{array}{ccccccccc}e^{\omega a}& -1 & &&&&&\\&1 & -1 & &&&&&\\  && 1 & -1&&&&&\\ &&&.&.&&&&\\&&&&.&.&&&\\&&&&&.&.&&\\&&&&&&1&-1&\\&&&&&&&1&-1\\e^{\omega a}&1&1&1&1&1&1&1&1 \end{array}\right|$$
$$-\det \tilde D_{\Gamma_{n-1}}\left|\begin{array}{ccccccccc}e^{-\omega a}& -1 & &&&&&\\&1 & -1 & &&&&&\\  && 1 & -1&&&&&\\ &&&.&.&&&&\\&&&&.&.&&&\\&&&&&.&.&&\\&&&&&&1&-1&\\&&&&&&&1&-1\\-e^{-\omega a}&1&1&1&1&1&1&1&1 \end{array}\right|.$$
By developing with respect to the first column the $m+1\times m+1$ minors,
$$\det D_{\Gamma_n}=\det D_{\Gamma_{n-1}}(e^{\omega a}\det D_{\Gamma^m}+(-1)^{m+2}e^{\omega a}(-1)^m)$$
$$-\det \tilde D_{\Gamma_{n-1}}(e^{-\omega a}\det D_{\Gamma^m}-(-1)^{m+2}e^{-\omega a}(-1)^m),$$
so using from the previous subsection that $\det D_{\Gamma^m}=m,\det \tilde D_{\Gamma^m}(\omega)=m-2$, we find 
$$\det D_{\Gamma_n}(\omega)=(m+1)e^{\omega a}\det D_{\Gamma_{n-1}}(\omega)-(m-1)e^{-\omega a}\det \tilde D_{\Gamma_{n-1}}(\omega)$$
$$=(m+1)e^{\omega a}\det D_{\Gamma_{n-1}}(\omega)\left(1-e^{-2\omega a}\frac{m-1}{m+1}\frac{\det \tilde D_{\Gamma_{n-1}}(\omega)}{\det D_{\Gamma_{n-1}}(\omega)}\right).$$

Now, from $P(n-1)$ we have for $|\Re \omega|$ small enough 
$$1-e^{-2\omega a}\frac{m-1}{m+1}\frac{\det \tilde D_{\Gamma_{n-1}}(\omega)}{\det D_{\Gamma_{n-1}}(\omega)}>c_0>0.$$
Also, $P(n-1)$ gives us the existence of two positive constants  $ c_{\Gamma_{n-1}}$ and $\epsilon_{\Gamma_{n-1}}$ such that $|\det D_{\Gamma_{n-1}}(\omega)|>c_{\Gamma_{n-1}},\,\forall\omega\in\mathbb C, |\Re\omega|<\epsilon_{\Gamma_{n-1}},$ so eventually we get 
$$\exists c_{\Gamma_{n}},\epsilon_{\Gamma_{n}}>0,\,|\det D_{\Gamma_{n}}(\omega)|>c_{\Gamma_{n}},\,\forall\omega\in\mathbb C, |\Re\omega|<\epsilon_{\Gamma_{n}},$$
and the first part of property $\mathcal P$  is proved for $P(n)$. 

In a similar way we get
$$\det \tilde D_{\Gamma_n}(\omega)=(m-1)e^{\omega a}\det D_{\Gamma_{n-1}}(\omega)-(m-3)e^{-\omega a}\det \tilde D_{\Gamma_{n-1}}(\omega),$$
so
$$\frac{\det \tilde D_{\Gamma_{n}}(\omega)}{\det D_{\Gamma_{n}}(\omega)}=\frac{\frac{m-1
}{m+1}-\frac{m-3
}{m+1}e^{-2\omega a}\frac{\det \tilde D_{\Gamma_{n-1}}(\omega)}{\det D_{\Gamma_{n-1}}(\omega)}}{1-\frac{m-1
}{m+1}e^{-2\omega a}\frac{\det \tilde D_{\Gamma_{n-1}}(\omega)}{\det D_{\Gamma_{n-1}}(\omega)}}.$$
Thus  we also get the second part of $\mathcal P$ for $P(n)$ since 
$$\left|\frac{\frac{m-1
}{m+1}-\frac{m-3
}{m+1}z}{1-\frac{m-1
}{m+1}z}\right|<1\iff 0<(m-2)(|z|^2-1)+2(m-1)(1-\Re z).$$

\section{The laminar coefficient case}\label{slaminar}

For $\omega\geq 0$ let $R_\omega$ 
be the resolvent of the operator $\Delta_{\sigma,\Gamma}$ defined in Section \ref{notations}
$$R_{\omega}\bold f=(-\Delta_{\sigma,\Gamma}+\omega ^2I)^{-1}\bold f.$$ 
We shall proceed as in the previous section, and the main point will be the proof of Proposition \ref{propdet} in the laminar coefficient case. On each side of the edge, parametrized by $x\in I\subset\mathbb R$, where the coefficient in the laminar Laplacian is $\sigma(x)=\frac{1}{b^2}$, the resolvent writes
$$R_\omega \bold f(x)=ce^{\omega bx}+\tilde ce^{-\omega
bx}+\frac{1}{2\omega}\int_{I}\bold f(y)\,e^{-\omega b
\mo{x-y}}dy, \ x\in I.$$
As in the previous section, and using the same notations, we shall show by recursion on the number of vertices property $P(n)$ which leads to the dispersion estimate.

\subsection{Proof of $P(1)$} We prove property $P(1)$ on a star-shaped tree by recursion on the number of discontinuities in the laminar structure:
$$P(1,p): \,\,\,\,\mbox{If }\Gamma \mbox{ has 1 vertex and $p$ discontinuities along its edges we have  property }\mathcal P.$$
We denote by $m\geq 3$ the number of edges.\\

We start with $P(1,0)$. We denote by $\frac{1}{b_j^2}$, $1\leq j\leq m$ the coefficients of the laminar Laplacian on each edge.  
The resolvent contains the
terms  $\tilde c_je^{-\omega b_jx}$, $1\leq j\leq m$ (and $c_me^{\omega b_3x}$ on the last edge for the computation of $\tilde D_\Gamma$) on each edge. We have
$$D_{\Gamma^{m,0}}=\left(\begin{array}{cccccccc}1 & -1 & &&&&&\\  & 1 & -1&&&&&\\ &&.&.&&&&\\&&&.&.&&&\\&&&&.&.&&\\&&&&&1&-1&\\&&&&&&1&-1\\\frac{1}{b_1}&\frac{1}{b_2}&\frac{1}{b_3}&..&..&\frac{1}{b_{m-2}}&\frac{1}{b_{m-1}}&\frac{1}{b_m} \end{array}\right).$$
By developing with respect to the last column, 
$$\det D_{\Gamma^{m,0}}=\frac{1}{b_m}+\det D_{\Gamma^{m-1,0}}=\sum_{j=1}^m\frac{1}{b_j}.$$
Similarly we obtain
$$\det \tilde D_{\Gamma^{m,0}}=\sum_{j=1}^{m-1}\frac{1}{b_j}-\frac{1}{b_m},$$
so the property $\mathcal P$ follows immediately.

Now we shall prove that $P(1,p-1)$ implies $P(1,p)$. Without loss of generality we can suppose that on the last $m$-th edge of $\Gamma^{m,p}$ there is at least one discontinuity. We denote by $x_{f}$ the last discontinuity point on this edge, and by  $x_{i}$ the previous discontinuity if there is one, or $x_i=0$ otherwise. We denote $a$ the length $x_f-x_i$, by $\frac{1}{b_i^2}$ the coefficient of the laminar Laplacian on $(x_i,x_f)$ and by $\frac{1}{b_f^2}$ the coefficient of the laminar Laplacian on $(x_f,\infty)$. 
We call $\Gamma^{m,p-1}$ the graph obtained from $\Gamma^{m,p}$ by removing the last discontinuity $x_f$ on the last edge, and we extend the laminar Laplacian on it on $[x_f,\infty)$ by $\frac{1}{b_i^2}$. 

With respect to the problem on $\Gamma^{m,p-1}$, in the expression of the resolvent we have on $[x_i,x_f]$ aside from the term $\tilde c_ie^{-\omega b_ix}$, the extra term $c_ie^{\omega b_ix}$, and on $[x_f,\infty)$ a term $\tilde c_fe^{-\omega b_f x}$. Also, there are two connection conditions at the new discontinuity point $x_f$. We write the matrix $D_{\Gamma^{m,p}}$ such that the last three columns correspond to the unknowns $(\tilde c_i,c_i,\tilde c_f)$, the last two lines come from the connection condition at $x_f$, and the previous last two lines come from the connection condition at $x_i$, if $x_i$ is a discontinuity point, and from the Kirchhoff conditions concerning $c_i,\tilde c_i$ if it is the vertex.

We have
$$D_{\Gamma^{m,p}}=\left(\begin{array}{cccc|ccccccccc}&&&&&& & \\&&D_{\Gamma^{m,p-1}}&&&& & \\&&&&-e^{\omega b_i x_i}&& & \\&&&&-\frac{e^{\omega b_i x_i}}{b_i}&& &\\ \hline &&&e^{-\omega b_i x_f}&e^{\omega b_i x_f}& -e^{-\omega b_f x_f} \\&&&-\frac{e^{-\omega b_i x_f}}{b_i}&\frac{e^{\omega b_i x_f}}{b_i}&-\frac{e^{-\omega b_f x_f}}{b_f}\end{array}\right).$$
By developing with respect to the last two lines,
$$\det D_{\Gamma^{m,p}}= e^{\omega b_i x_f} e^{-\omega b_f x_f}\left(\frac{1}{b_f}+\frac{1}{b_i}\right)\det D_{\Gamma^{m,p-1}}-e^{-\omega b_i x_f} e^{-\omega b_f x_f}\left(\frac{1}{b_f}-\frac{1}{b_i}\right)\det \tilde D_{\Gamma^{m,p-1}}  $$
$$= e^{\omega b_i x_f} e^{-\omega b_f x_f}\left(\frac{1}{b_f}+\frac{1}{b_i}\right)\det D_{\Gamma^{m,p-1}}\left(1-e^{-2\omega b_i x_f}\frac{\frac{1}{b_f}-\frac{1}{b_i}}{\frac{1}{b_f}+\frac{1}{b_i}} \frac{\det\tilde D_{\Gamma^{m,p-1}} }{\det D_{\Gamma^{m,p-1}} }\right).$$
Therefore $P(1,p-1)$ implies the first part of property $P(1,p)$. In a similar way we compute
$$\det \tilde D_{\Gamma^{m,p}}= e^{\omega b_i x_f} e^{\omega b_f x_f}\left(-\frac{1}{b_f}+\frac{1}{b_i}\right)\det D_{\Gamma^{m,p-1}}-e^{-\omega b_i x_f} e^{\omega b_f x_f}\left(-\frac{1}{b_f}-\frac{1}{b_i}\right)\det \tilde D_{\Gamma^{m,p-1}},$$
so
$$ \frac{\det\tilde D_{\Gamma^{m,p}} }{\det D_{\Gamma^{m,p}} }=
e^{2\omega b_fx_f}
\frac{\frac{-\frac{1}{b_f}+\frac{1}{b_i}}{\frac{1}{b_f}+\frac{1}{b_i}}+e^{-2\omega b_ix_f} \frac{\det\tilde D_{\Gamma^{m,p-1}} }{\det D_{\Gamma^{m,p-1}} }}{1+e^{-2\omega b_ix_f}\frac{-\frac{1}{b_f}+\frac{1}{b_i}}{\frac{1}{b_f}+\frac{1}{b_i}}e^{-2\omega b_ix_f} \frac{\det\tilde D_{\Gamma^{m,p-1}} }{\det D_{\Gamma^{m,p-1}} }},$$
and the second part of $P(1,p)$ follows also from $P(1,p-1)$ by noticing that for $a$ real 
$$\left|\frac{a+z}{1+az}\right|<1\iff 0<(1-a^2)(1-|z|^2).$$
In conclusion $P(1)$ is proved.

\subsection{Proof of $P(n-1)\Rightarrow P(n)$.} 
Let us consider a vertex $v$ from which  there start $m\geq 2$ external infinite edges and one internal edge connecting it to the rest of the graph.
The proof of $P(1,p-1)\Rightarrow P(1,p)$, of eliminating discontinuities on infinite edges also works for trees with $n\geq 1$ vertices. So it is enough to prove $P(n)$ for a graph $\Gamma_n$ 
with no discontinuities on the infinite edges emanating from vertex $v$. 
 We call $\frac{1}{b_j^2},1\leq j\leq m$ the coefficients of the laminar Laplacian on the $m$ edges emerging from $v$. With the notations of the previous subsection and from \S\ref{rec} we have
$$D_{\Gamma_n}=\left(\begin{array}{cccc|ccccccccc}&&&&&& & &&&&&\\&&D_{\Gamma_{n-1}}&&&& & &&&&&\\&&&&-e^{\omega b_i x_i}&& & &&&&&\\&&&&-\frac{-e^{\omega b_i x_i}}{b_i}&& & &&&&&\\ \hline&&&e^{-\omega b_ix_f}&e^{\omega b_i x_f}& -1 & &&&&&\\&&&&&1 & -1 & &&&&&\\  &&&&&& & ...&&&&&\\ &&&&&&&&...&&&&\\&&&&&&&&&...&&\\&&&&&&&&&&&&\\&&&&&&&&&&1&-1&\\&&&&&&&&&&&1&-1\\&&&-\frac{e^{-\omega b_ix_f}}{b_i}&\frac{e^{\omega b_i x_f}}{b_i}&\frac{1}{b_1}&\frac{1}{b_2}&...&...&...&\frac{1}{b_{m-2}}&\frac{1}{b_{m-1}}&\frac{1}{b_m} \end{array}\right).$$
By developing with respect to the first column the $m+1\times m+1$ minors,
$$\det D_{\Gamma_n}=\det D_{\Gamma_{n-1}}(e^{\omega b_ix_f}\det D_{\Gamma^m}+(-1)^{m+2}\frac{e^{\omega b_ix_f}}{b_i}(-1)^m)$$
$$-\det \tilde D_{\Gamma_{n-1}}(e^{-\omega b_ix_f}\det D_{\Gamma^m}-(-1)^{m+2}\frac{e^{-\omega b_ix_f}}{b_i}(-1)^m),$$
so using from the previous subsection the values of $\det D_{\Gamma^m}$ and $\det \tilde D_{\Gamma^m}$, 
$$\det D_{\Gamma_n}(\omega)=e^{\omega b_ix_f}\left(\sum_{j=1}^m\frac{1}{b_j}+\frac{1}{b_i}\right)\det D_{\Gamma_{n-1}}-e^{-\omega b_ix_f}\left(\sum_{j=1}^m\frac{1}{b_j}-\frac{1}{b_i}\right)\det \tilde D_{\Gamma_{n-1}}$$
$$=e^{\omega b_ix_f}\left(\sum_{j=1}^m\frac{1}{b_j}+\frac{1}{b_i}\right)\det D_{\Gamma_{n-1}}\left(1-e^{-2\omega b_ix_f}\frac{\sum_{j=1}^m\frac{1}{b_j}-\frac{1}{b_i}}{\sum_{j=1}^m\frac{1}{b_j}+\frac{1}{b_i}}\frac{\det \tilde D_{\Gamma_{n-1}}}{\det D_{\Gamma_{n-1}}}\right).$$
Therefore, using $P(n-1)$ we get the first part of $P(n)$. In a similar way we get
$$\det \tilde D_{\Gamma_n}(\omega)=e^{\omega b_ix_f}\left(\sum_{j=1}^{m-1}\frac{1}{b_j}-\frac{1}{b_m}+\frac{1}{b_i}\right)\det D_{\Gamma_{n-1}}-e^{-\omega b_ix_f}\left(\sum_{j=1}^{m-1}\frac{1}{b_j}-\frac{1}{b_m}-\frac{1}{b_i}\right)\det \tilde D_{\Gamma_{n-1}},$$
so
$$\frac{\det \tilde D_{\Gamma_n}}{\det D_{\Gamma_n}}=\frac{\frac{\sum_{j=1}^{m-1}\frac{1}{b_j}-\frac{1}{b_m}+\frac{1}{b_i}}{\sum_{j=1}^m\frac{1}{b_j}+\frac{1}{b_i}}-e^{-2\omega b_ix_f}\frac{\sum_{j=1}^{m-1}\frac{1}{b_j}-\frac{1}{b_m}-\frac{1}{b_i}}{\sum_{j=1}^m\frac{1}{b_j}+\frac{1}{b_i}}\frac{\det \tilde D_{\Gamma_{n-1}}}{\det D_{\Gamma_{n-1}}}}{1-e^{-2\omega b_ix_f}\frac{\sum_{j=1}^m\frac{1}{b_j}-\frac{1}{b_i}}{\sum_{j=1}^m\frac{1}{b_j}+\frac{1}{b_i}}\frac{\det \tilde D_{\Gamma_{n-1}}}{\det D_{\Gamma_{n-1}}}}.$$
Since for $a,b,c$ real
$$\left|\frac{\frac{a-b+c}{a+b+c}-\frac{a-b-c}{a+b+c}z}{1-\frac{a+b-c}{a+b+c}z}\right|\leq 1\iff 0<ab(\Im ^2z+(\Re z-1)^2)+bc(1-|z|^2),$$
the second part of $P(n)$ follows, completing the proof in the laminar case.

\section{Open Problems}\label{open}
In this paper we have analyzed the dispersive properties for the linear Schr\"odinger equation on trees. We have assumed that the coupling is given by the classical  Kirchhoff's conditions. However there are other  coupling conditions (see \cite{MR1671833}) which allow to define a ``Laplace" operator on a metric graph. To be more precise, let us consider the operator $H$ that acts on functions on the graph $\Gamma$ as the second derivative $\frac {d^2}{dx^2}$, and its domain consists in all functions $f$ that belong to the Sobolev space
$H^2(e)$ on each edge $e$ of $\Gamma$ and satisfy the following boundary condition at the vertices:
\begin{equation}\label{con.1}
A(v){ \bf f} (v)+B(v){\bf f}'(v)=0 \quad \text{for each vertex} \ v.
\end{equation}
Here ${\bf f}(v)$ and ${\bf f}'(v)$ are correspondingly the vector of values of $f$ at $v$ attained from directions of different edges converging at $v$ and the vector of derivatives at $v$ in the outgoing directions.
For each vertex $v$ of the tree we assume that  matrices $A(v)$ and $B(v)$ are of size $d(v)$ and satisfy the following two conditions\\
\begin{enumerate}
\item the joint matrix $(A(v), B(v))$ has maximal rank, i.e.  $d(v)$,\\
\item $A(v)B(v)^T=B(v)A(v)^T$. 
\end{enumerate} 

Under those assumptions it has been proved in \cite{MR1671833} that the considered operator, denoted by $\Delta(A,B)$, is self-adjoint. The case considered in this paper, the Kirchhoff coupling, corresponds to the matrices
$$A(v)=\left(\begin{array}{cccccc}1 & -1 & 0 & \dots & 0 & 0 \\0 & 1 & -1 & \dots & 0 & 0 \\0 & 0 & 1 & \dots & 0 & 0 \\\vdots & \vdots & \vdots &  & \vdots & \vdots \\0 & 0 & 0 & \vdots & 1 & -1 \\0 & 0 & 0 & \vdots & 0 & 0\end{array}\right),\ 
B(v)=\left(\begin{array}{cccccc}0 & 0 & 0 & \dots & 0 & 0 \\0 & 0 & 0 & \dots & 0 & 0 \\0 & 0 & 0 & \dots & 0 & 0 \\\vdots & \vdots & \vdots &  & \vdots & \vdots \\0 & 0 & 0 & \dots & 0 & 0 \\1 & 1 & 1 & \dots & 1 & 1\end{array}\right).$$
More examples of matrices satisfying the above conditions are given in \cite{MR1671833, MR2459885}.

 The existence of the dispersive properties for the solutions of the Schr\"odinger  on a graph
  under general coupling conditions on the vertices $iu_t+\Delta_\Gamma(A,B)u=0$ is mainly an open problem. The resolvent formula obtained in \cite{MR2459885} and \cite{MR2277618} in terms of the coupling matrices $A$ and $B$ might help to understand the general problem. In the same papers there are also some combinatorial formulations of the resolvent in terms of walks on graphs. Such combinational aspects could clarify if the dispersion is possible only on trees or there are graphs (with some of the edges infinite)  with suitable couplings where the dispersion is still true. 
 
It is expected that other results on the Schr\"odinger equation on $\mathbb R$ are still valid on networks. For instance, the smoothing estimate for the linear equation with constant coefficients is still valid. Although its classical proof on $\mathbb R$ relies on Fourier analysis, one may easily adapt the proof in \cite{BP} which uses only integrations by parts and Besovs spaces that can still be defined on a tree using the heat operator. 
Strichartz estimates has been used previously  to treat controllability issues for the NSE in \cite{MR2514738}. The possible applications of the present results in the control context remains to be analyzed. We mention here some previous works on the controllability/stabilization of the wave equation on networks \cite{MR2169126}, \cite{MR2558320}.
 
 Finally, another problem of interest is the study of the dispersion properties for the  magnetic operators analyzed in \cite{MR1937279}, \cite{MR2007178}. 
The analysis in this case is more difficult since in the presence of an external magnetic field the effect of the topology of the graph becomes more pronounced. In contrast with the analysis done here, in the case of magnetic operators the graphs are viewed as structures in the 
 three dimensional Euclidean space $\R^3$ and the orientation of the edges becomes important.


%
%

\end{document}